\newtheorem{theorem}{Theorem}[section]
\newtheorem{lemma}[theorem]{Lemma}
\newtheorem*{definition*}{Definition}
\date{}
\begin{document}

\title{On the two-parameter Erd\H os-Falconer distance problem \\over finite fields}

\author{
Cl\'{e}ment Francois\thanks{ETH Zurich, Switzerland. Email: {\tt fclement@student.ethz.ch}}
\and
 Hossein Nassajian Mojarrad\thanks{Courant Institute, New York University. Email: {\tt sn2854@nyu.edu}. Supported by Swiss National Science Foundation grant P2ELP2-178313.}
\and
Duc Hiep Pham\thanks{University of Education, Vietnam National University, Hanoi. Email: {\tt phamduchiep@vnu.edu.vn}}
\and
Chun-Yen Shen \thanks{Department of Mathematics,  National Taiwan University. Email: {\tt cyshen@math.ntu.edu.tw}}
}

\date{}
\maketitle

\begin{abstract}
 Given $E \subseteq \mathbb{F}_q^d \times \mathbb{F}_q^d$, with the finite field $\mathbb{F}_q$ of order $q$ and the integer $d \ge 2$, we define the two-parameter distance set as $\Delta_{d, d}(E)=\left\{\left(\|x_1-y_1\|, \|x_2-y_2\|\right) : (x_1,x_2), (y_1,y_2) \in E \right\}$. Birklbauer and Iosevich (2017) proved that if $|E| \gg q^{\frac{3d+1}{2}}$, then $ |\Delta_{d, d}(E)| = q^2$. For the case of $d=2$, they showed that if $|E| \gg q^{\frac{10}{3}}$, then $ |\Delta_{2, 2}(E)| \gg q^2$. In this paper, we present extensions and improvements of these results. 
\end{abstract}

\noindent 2010 Mathematical Subject Classification: 52C10 (11T99)\\
Keywords: Erd\H{o}s-Falconer distance problem, finite fields.  

\section{Introduction}
The general Erd\H os distance problem asks to determine the number of distinct distances spanned by a finite set of points. In the Euclidean space, it is conjectured that for any finite set $E \subset \mathbb{R}^d$, $d\ge 2$, we have $|\Delta(E)| \gtrapprox |E|^{\frac{2}{d}}$, where $\Delta(E)=\{\|x-y\| : x,y \in E\}$. Here and throughout, $X\ll Y$ means that there exists $C>0$ such that $X\le CY$, and $X\lessapprox Y$ with the parameter $N$ means that for any $\varepsilon>0$, there exists $C_{\varepsilon}>0$ such that $X \le C_{\varepsilon}N^{\varepsilon}Y$.

The finite field analogue of the distance problem was first studied by Bourgrain, Katz, and Tao \cite{BKT04} over prime fields. In this setting, the Euclidean distance among any two points $\boldsymbol{x}=(x_1,\ldots, x_d),\boldsymbol{y}=(y_1,\ldots,y_d) \in \mathbb{F}_q^d$, the $d$-dimensional vector space over the finite field of order $q$, is defined as $\|\boldsymbol{x}-\boldsymbol{y} \|=\displaystyle\sum_{i=1}^d(x_i-y_i)^2\in\mathbb F_q$.
For prime fields $\mathbb{F}_p$ with $p\equiv -1 \pmod 4$, they showed that if 
$E \subset \mathbb{F}_p^2$ with $|E|=p^{\delta}$ for some $0<\delta<2$, then the distance set satisfies $|\Delta(E)| \gg |E|^{\frac{1}{2}+\varepsilon}$, for some $\varepsilon>0$ depending only on $\delta$. 

This bound does not hold in general for arbitrary finite fields $\mathbb{F}_q$ as shown by Iosevich and Rudnev \cite{IR07}. In this general setting, they considered the Erd\H os-Falconer distance problem to determine how large $E \subset \mathbb{F}_q^d$ needs to be so that $\Delta(E)$ spans all possible distances or at least a positive proportion of them. More precisely, they proved that $\Delta(E)=\mathbb{F}_q$ if $|E| > 2q^{\frac{d+1}{2}}$, where the exponent is sharp for odd $d$. It is conjectured that in even dimensions, the optimal exponent will be $\frac{d}{2}$. As a relaxed fractional variant for $d=2$, it was shown in \cite{CEHIK} that if $E \subseteq \mathbb{F}_q^2$ satisfies $|E| \gg q^{\frac{4}{3}}$, then $|\Delta(E)| \gg q$. A recent series of other improvements and generalizations on the Erd\H os-Falconer distance problem can be found in \cite{hieu, koh, claudiu, suk, thang}. 

Using Fourier analytic techniques, a two-parameter variant of the Erd\H os-Falconer distance problem for the Euclidean distance was studied by Birklbauer and Iosevich in \cite{BI17}. More precisely, given $E \subseteq \mathbb{F}_q^d \times \mathbb{F}_q^d$, where $d\ge 2$, define the two-parameter distance set as
\[
\Delta_{d, d}(E)=\left\{\left(\|x_1-y_1\|, \|x_2-y_2\|\right) : (x_1,x_2), (y_1,y_2) \in E \right\}
\subseteq \mathbb{F}_q \times \mathbb{F}_q. \]
They proved the following results. 

\begin{theorem}\label{res11}
Let $E$ be a subset in $\mathbb{F}_q^d \times \mathbb{F}_q^d$. If $|E| \gg q^{\frac{3d+1}{2}}$, then $ |\Delta_{d, d}(E)| = q^2$.
\end{theorem}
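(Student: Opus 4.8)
The plan is to count, for each target pair $(t_1,t_2)\in\Fq\times\Fq$, the number of realizations
\[
\nu(t_1,t_2)=\#\left\{(x,y)\in E\times E:\ \|x_1-y_1\|=t_1,\ \|x_2-y_2\|=t_2\right\},
\]
where $x=(x_1,x_2)$, $y=(y_1,y_2)$ with $x_i,y_i\in\Fq^d$, and to show $\nu(t_1,t_2)>0$ for every $(t_1,t_2)$ once $|E|\gg q^{(3d+1)/2}$. I would introduce the product sphere $S_{t_1,t_2}=\{(z_1,z_2)\in\Fq^{2d}:\|z_1\|=t_1,\ \|z_2\|=t_2\}$, so that $\nu(t_1,t_2)=\sum_{x,y}E(x)E(y)\,S_{t_1,t_2}(x-y)$, identifying sets with their indicator functions. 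Expanding $S_{t_1,t_2}$ in additive characters $\chi$ of $\Fq$ and applying Plancherel on $\Fq^{2d}$ (with $\widehat f(m)=q^{-2d}\sum_x f(x)\chi(-m\cdot x)$) yields
\[
\nu(t_1,t_2)=q^{4d}\sum_{m\in\Fq^{2d}}\widehat{S_{t_1,t_2}}(m)\,|\widehat{E}(m)|^2 .
\]

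The $m=0$ term is the main term. Since the sphere factorizes, $\widehat{S_{t_1,t_2}}(m_1,m_2)=\widehat{S_{t_1}}(m_1)\widehat{S_{t_2}}(m_2)$, where $S_t\subseteq\Fq^d$ is a single sphere with $\widehat{S_t}(0)=|S_t|/q^d$. The standard count $|S_t|=q^{d-1}+O(q^{(d-1)/2})$ then gives a main term $|S_{t_1}||S_{t_2}||E|^2 q^{-2d}=(1+o(1))|E|^2 q^{-2}$. For the off-origin frequencies I would compute $\widehat{S_t}(m)$ via the orthogonality trick together with the one-dimensional Gauss-sum evaluation $\sum_{x}\chi(sx^2+bx)=\eta(s)\,G\,\chi(-b^2/(4s))$ for $s\ne0$, with $\eta$ the quadratic character and $|G|=q^{1/2}$. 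This produces, for $m\ne0$,
\[
\widehat{S_t}(m)=\frac{G^d}{q^{d+1}}\sum_{s\ne0}\eta(s)^d\,\chi\!\left(-st+\tfrac{\|m\|}{4s}\right),
\]
where the remaining sum over $s$ is a Kloosterman/Sali\'e-type sum of modulus $O(q^{1/2})$, yielding the decay $|\widehat{S_t}(m)|\ll q^{-(d+1)/2}$.

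For the error term I would use the product structure and $|\widehat{S_t}(0)|\ll q^{-1}$. The nonzero frequencies split into three cases; the dominant one is when exactly one of $m_1,m_2$ vanishes, for which $|\widehat{S_{t_1,t_2}}(m)|\ll q^{-1}\cdot q^{-(d+1)/2}=q^{-(d+3)/2}$, while the both-nonzero frequencies contribute only $q^{-(d+1)}$. Hence $\max_{m\ne0}|\widehat{S_{t_1,t_2}}(m)|\ll q^{-(d+3)/2}$, and pulling this out, the Plancherel identity $\sum_{m}|\widehat E(m)|^2=|E|q^{-2d}$ gives
\[
\left|\nu(t_1,t_2)-(1+o(1))\tfrac{|E|^2}{q^2}\right|\le q^{4d}\max_{m\ne0}|\widehat{S_{t_1,t_2}}(m)|\sum_{m\ne0}|\widehat E(m)|^2\ll |E|\,q^{(3d-3)/2}.
\]
Comparing, $\nu(t_1,t_2)>0$ for all $(t_1,t_2)$ as soon as $|E|^2 q^{-2}\gg |E| q^{(3d-3)/2}$, i.e.\ $|E|\gg q^{(3d+1)/2}$, which is exactly the claimed threshold.

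The \emph{main obstacle} is the uniform control of $\widehat{S_t}(m)$ across all $m\ne0$ and all $t$, including the degenerate directions: when $m$ is isotropic ($\|m\|=0$) the exponential sum over $s$ collapses to a Gauss sum, and for the zero distance $t=0$ (which must also be realized, since we want all of $\Fq\times\Fq$) both the sphere count and its transform behave differently, especially in even dimension. Ensuring that these boundary cases do not spoil the $q^{-(d+1)/2}$ decay — so that the mixed-frequency estimate $q^{-(d+3)/2}$ survives for every $(t_1,t_2)$ — is the delicate part, and is where the Gauss-sum bookkeeping must be carried out with care.
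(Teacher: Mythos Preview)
Your proposal is essentially correct: it is the Fourier-analytic counting argument of Birklbauer--Iosevich in \cite{BI17}, and the arithmetic checks out (main term $\sim |E|^2/q^2$, error $\ll |E|\,q^{(3d-3)/2}$, threshold $|E|\gg q^{(3d+1)/2}$). The degenerate cases you flag---$t=0$ in even dimension and isotropic $m$---are genuine but standard to absorb.

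Note, however, that the present paper does \emph{not} give its own proof of Theorem~\ref{res11}; that theorem is quoted from \cite{BI17}. The paper's own argument is for the extension Theorem~\ref{res1}, and it takes a completely different, elementary route with no characters at all. One first pigeonholes on the second coordinate: the set $Y=\{y\in\Fq^d:|E_y|>\tfrac{C}{2}q^{(d+1)/2}\}$ of ``rich'' fibers is shown to satisfy $|Y|\gg q^{(d+1)/2}$ by averaging. Then Vinh's lemma (Lemma~\ref{lmm1}) applied to $Y\times Y$ gives $\gg q$ second-coordinate distances; for each such distance one picks representatives $z,t\in Y$, and the fibers $E_z,E_t$ are again large enough that Lemma~\ref{lmm1} produces $\gg q$ first-coordinate distances, yielding $|\Delta_{d,d}^s(E)|\gg q^2$.

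The trade-off is clear. The paper's pigeonhole-plus-black-box approach is short, robust, and works uniformly for the generalized $\|\cdot\|_s$ distance (and for multi-parameter variants), but it only delivers $\gg q^2$ and cannot give the full equality $|\Delta_{d,d}(E)|=q^2$. Your Fourier approach recovers the sharp conclusion $=q^2$, at the price of the Gauss/Kloosterman bookkeeping and the case analysis you identify as the main obstacle.
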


\begin{theorem}\label{res22}
Let $E$  be a subset in $\mathbb{F}_q^2 \times \mathbb{F}_q^2$. If $|E| \gg q^{\frac{10}{3}}$, then $ |\Delta_{2, 2}(E)| \gg q^2$.
\end{theorem}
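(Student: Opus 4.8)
The plan is a second–moment (energy) argument. For $(t_1,t_2)\in\mathbb{F}_q\times\mathbb{F}_q$ put
\[
\nu(t_1,t_2)=\#\{((x_1,x_2),(y_1,y_2))\in E\times E:\ \|x_1-y_1\|=t_1,\ \|x_2-y_2\|=t_2\},
\]
so that $\sum_{t_1,t_2}\nu(t_1,t_2)=|E|^2$ and, by Cauchy--Schwarz,
\[
|\Delta_{2,2}(E)|\ \ge\ \frac{|E|^4}{\Lambda},\qquad \Lambda:=\sum_{t_1,t_2}\nu(t_1,t_2)^2 .
\]
Thus it suffices to prove the energy bound $\Lambda\ll |E|^4/q^2$, where $\Lambda$ counts quadruples in $E^4$ whose first and second coordinate distances agree separately. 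Expanding each distance constraint as a character sum with a nontrivial additive character $\chi$ gives
\[
\Lambda=\frac1{q^2}\sum_{s_1,s_2\in\mathbb{F}_q}\bigl|G(s_1,s_2)\bigr|^2,\qquad G(s_1,s_2)=\sum_{(x_1,x_2),(y_1,y_2)\in E}\chi\!\bigl(s_1\|x_1-y_1\|+s_2\|x_2-y_2\|\bigr),
\]
and since the term $s_1=s_2=0$ equals exactly $|E|^4/q^2$, the whole problem is to show $\sum_{(s_1,s_2)\ne(0,0)}|G(s_1,s_2)|^2\ll|E|^4$.

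To expose the structure I would first pass to a \emph{pinned} count by two applications of Cauchy--Schwarz. Writing $\Lambda=\sum_{\mathbf x,\mathbf x'\in E}N(\mathbf x,\mathbf x')$ and using $N(\mathbf x,\mathbf x')=\langle n_{\mathbf x},n_{\mathbf x'}\rangle$, where $n_{\mathbf x}(u,v)=\#\{\mathbf y\in E:\|x_1-y_1\|=u,\ \|x_2-y_2\|=v\}$, one obtains
\[
\Lambda\ \le\ |E|\sum_{\mathbf x\in E}P(\mathbf x),\qquad P(\mathbf x)=\sum_{u,v}n_{\mathbf x}(u,v)^2 ,
\]
so the target becomes $\sum_{\mathbf x\in E}P(\mathbf x)\ll|E|^3/q^2$. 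The advantage is that $P(\mathbf x)$ has a free apex: expanding it by characters and summing the apex over all of $\mathbb{F}_q^4$, the linearity of $\|x_i-y_i\|-\|x_i-y_i'\|$ in $x_i$ yields clean orthogonality. In the mixed range $s_1,s_2\ne0$ this forces $\mathbf y=\mathbf y'$ and bounds the contribution by $O(q^4|E|)$ per frequency, which is harmless once $|E|\gg q^{3}$.

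The single-coordinate ranges $s_2=0\ne s_1$ (and its symmetric partner) are the crux. Introducing the fibre counts $f(x_1)=\#\{x_2:(x_1,x_2)\in E\}$, this contribution reduces to the weighted planar isosceles count
\[
T=\sum_{x_1,y_1,y_1'\in\mathbb{F}_q^2}f(x_1)f(y_1)f(y_1')\,\mathbf 1\!\left[\|x_1-y_1\|=\|x_1-y_1'\|\right],
\]
and the required estimate is $T\ll|E|^3/q$, i.e.\ near-equidistribution of the weighted pinned distances. Geometrically $T$ counts weighted incidences between the points $x_1$ and the perpendicular-bisector lines of the pairs $(y_1,y_1')$, so this is exactly where planar distance/incidence technology (as in \cite{CEHIK}) must enter.

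The main obstacle is precisely this weighted planar bound. A pure Fourier/Gauss-sum treatment of $T$ controls it only through $\|f\|_2$ and $\|f\|_\infty$ and, because the fibres $f$ may concentrate on a line or a small near-extremal set (where the weighted energy saturates its trivial lower bound $|E|^3/q$), yields merely the weaker threshold $|E|\gg q^{7/2}$. Reaching the claimed $|E|\gg q^{10/3}$ requires the sharp incidence bound to rule out such concentration and to balance the incidence error against $|E|^3/q$ over all admissible fibre distributions ($\sum_{x_1}f(x_1)=|E|$, $f\le q^2$). Once $T\ll|E|^3/q$ is established, the chain $\sum_{\mathbf x}P(\mathbf x)\ll|E|^3/q^2$, then $\Lambda\ll|E|^4/q^2$, and finally $|\Delta_{2,2}(E)|\gg q^2$ follows.
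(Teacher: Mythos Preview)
Your proposal is a plan, not a proof: the decisive step---the weighted planar isosceles bound $T\ll|E|^3/q$---is named as the target but never established. You say so yourself (``Once $T\ll|E|^3/q$ is established\dots''), and you correctly note that a pure Gauss-sum treatment of $T$ only reaches the threshold $|E|\gg q^{7/2}$. Invoking \cite{CEHIK} does not by itself close the gap: what you actually need is a \emph{weighted} version of the $\mathbb{F}_q^2$ isosceles-triangle estimate, uniform over all fibre weights $f$ with $\sum_{x_1}f(x_1)=|E|$ and $0\le f\le q^2$, and you have not indicated how to derive this from the unweighted statement or how to carry out the balancing you allude to. Until that input is supplied, the argument stops exactly where it becomes hard.

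As for comparison: the present paper does not itself prove Theorem~\ref{res22}; the result is quoted from Birklbauer--Iosevich \cite{BI17}, whose argument is Fourier-analytic in the spirit of your sketch. The paper's own technique, used for Theorems~\ref{res1} and~\ref{res3}, is completely different and elementary: slice $E$ over one copy of $\mathbb{F}_q^d$, locate a large set $Y$ of base points whose fibres $E_y$ are all rich, and apply a one-parameter distance lemma twice---once to $Y\times Y$ and once to a pair of fibres $E_z\times E_t$. With Lemma~\ref{lmm1} this yields only the threshold $q^{(3d+1)/2}=q^{7/2}$ when $d=2$; recovering $q^{10/3}$ along the paper's route would instead require the bipartite planar input ``$|X|,|Y|\gg q^{4/3}\Rightarrow|\Delta(X,Y)|\gg q$'', which is a close relative of the very estimate your outline leaves open.
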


In this short note, we provide an extension and an improvement of these results. Compared to the method in \cite{BI17}, our results are much elementary. 

For $\boldsymbol{x}=(x_1,\ldots,x_d),\boldsymbol{y}=(y_1,\ldots,y_d) \in \mathbb{F}_q^d$ and for an integer $s\ge 2$, we introduce
$$\|\boldsymbol{x}-\boldsymbol{y} \|_s:=\sum_{i=1}^d a_i(x_i-y_i)^s,$$ 
where $a_i \in \mathbb{F}_q$ with $a_i \neq 0$ for $i=1,\ldots, d$. For any set $E\subset \mathbb{F}_q^d\times \mathbb{F}_q^d$, define 
\[\Delta_{d, d}^s(E)=\left\{\left(\|x_1-y_1\|_s, \|x_2-y_2\|_s\right) : (x_1,x_2), (y_1,y_2) \in E \right\}.\]
Our first result reads as follows. 
\begin{theorem}\label{res1}
Let $E$ be a subset in $\mathbb{F}_q^d \times \mathbb{F}_q^d$. If $|E| \gg q^{\frac{3d+1}{2}}$, then $ |\Delta_{d, d}^s(E)| \gg q^2$.
\end{theorem}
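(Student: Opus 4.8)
The plan is to pass to the second moment of the distance--pair counting function and use Cauchy--Schwarz. For $(t_1,t_2)\in\mathbb{F}_q\times\mathbb{F}_q$ put
$$\nu(t_1,t_2)=\#\{(P,P')\in E\times E:\ \|P_1-P_1'\|_s=t_1,\ \|P_2-P_2'\|_s=t_2\},$$
where $P=(P_1,P_2)$. Since $\sum_{t_1,t_2}\nu(t_1,t_2)=|E|^2$ and $\nu$ is supported on $\Delta_{d,d}^s(E)$, Cauchy--Schwarz gives $|\Delta_{d,d}^s(E)|\ge |E|^4/Q$, where $Q=\sum_{t_1,t_2}\nu(t_1,t_2)^2$ counts quadruples in $E^4$ whose first and second distances agree; thus it suffices to show $Q\ll |E|^4/q^{2}$. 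Opening the two equalities with a nontrivial additive character $\chi$ of $\mathbb{F}_q$ via $\mathbf{1}[a=b]=q^{-1}\sum_{m}\chi(m(a-b))$ yields
$$Q=\frac{1}{q^{2}}\sum_{m_1,m_2\in\mathbb{F}_q}\bigl|\Sigma(m_1,m_2)\bigr|^{2},\qquad \Sigma(m_1,m_2)=\sum_{P,P'\in E}\chi\bigl(m_1\|P_1-P_1'\|_s+m_2\|P_2-P_2'\|_s\bigr).$$
The term $(m_1,m_2)=(0,0)$ is the main term $|E|^4/q^2$, so everything reduces to bounding the three families $m_1\ne0,m_2=0$; $m_1=0,m_2\ne0$; and $m_1,m_2\ne0$.

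The key device, and the point at which a general exponent $s$ must be confronted, is to rewrite each $\Sigma$ through an autocorrelation, so that the $s$-th power distance is evaluated at a single variable. For $m_1,m_2\ne0$, set $A_E(w)=\#\{P\in E:\ P-w\in E\}$ on $\mathbb{F}_q^{2d}$ and $\Phi(w)=\chi(m_1\|w_1\|_s+m_2\|w_2\|_s)$, so that $\Sigma(m_1,m_2)=\sum_w A_E(w)\Phi(w)$; Plancherel turns this into $q^{-2d}\sum_\xi\widehat{A_E}(\xi)\overline{\widehat\Phi(\xi)}$. Now $\widehat\Phi(\xi)$ factors into $2d$ one-variable complete sums $\sum_{x}\chi(m_j a_i x^{s}-\xi x)$, each bounded by $(s-1)\sqrt q$ via Weil's bound (this is where I would assume $\gcd(s,q)=1$, making the polynomials $m_j a_i x^s-\xi x$ non-degenerate, and where the hypothesis $a_i\ne0$ is used), so $|\widehat\Phi(\xi)|\le (s-1)^{2d}q^{d}$ \emph{uniformly} in $\xi$. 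Since $A_E$ is an autocorrelation, $\widehat{A_E}=|\widehat{\mathbf{1}_E}|^{2}\ge0$ and $\sum_\xi\widehat{A_E}(\xi)=q^{2d}A_E(0)=q^{2d}|E|$, which lets me pull the uniform bound out of this nonnegative $L^1$ sum to get $|\Sigma(m_1,m_2)|\ll q^{d}|E|$. The one-nonzero-frequency families are treated identically after projecting to the fibre-size function $g(u)=\#\{v:(u,v)\in E\}$ on $\mathbb{F}_q^{d}$: here $\Sigma(m,0)=\sum_{u,u'}g(u)g(u')\chi(m\|u-u'\|_s)=\sum_{w}A_g(w)\chi(m\|w\|_s)$ with $A_g$ the autocorrelation of $g$, and the same Plancherel-plus-Weil estimate gives $|\Sigma(m,0)|\ll q^{d/2}N_1$, where $N_1=\sum_u g(u)^2$, and symmetrically $|\Sigma(0,m)|\ll q^{d/2}N_2$.

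To assemble, summing over the $\sim q$, $\sim q$, and $\sim q^{2}$ frequencies in the three families produces error contributions $\ll q^{d+1}N_1^2$, $\ll q^{d+1}N_2^2$, and $\ll q^{2d+2}|E|^2$ to $\sum_{(m_1,m_2)\ne(0,0)}|\Sigma|^2$. The only remaining input is the fibrewise estimate
$$N_1,\,N_2=\sum_u g(u)^2\le \Bigl(\max_u g(u)\Bigr)\sum_u g(u)\le q^{d}|E|,$$
which turns the first two contributions into $\ll q^{3d+1}|E|^2$; since $d\ge2$ this dominates the third. Dividing by $q^2$, the total error in $Q$ is $\ll q^{3d-1}|E|^2$, and this is $\ll |E|^4/q^2$ exactly when $q^{3d+1}\ll|E|^2$, i.e.\ when $|E|\gg q^{(3d+1)/2}$. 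Hence $Q\ll|E|^4/q^2$ and $|\Delta_{d,d}^s(E)|\gg q^2$.

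The step I expect to be the main obstacle is precisely the general-$s$ estimate. For $s=2$ one can factor $\chi(m\|u-u'\|_2)$ into a product of a function of $u$ and a function of $u'$ and run a direct Fourier argument; for $s>2$ no such splitting exists. Replacing the pair $(P,P')$ by the single difference $w=P-P'$ through the autocorrelation $A_E$ (so that $\|\cdot\|_s$ is evaluated at one point) and then invoking the \emph{uniform} square-root cancellation of $\widehat\Phi$ --- valid at \emph{every} frequency because each univariate sum $\sum_x\chi(c\,x^{s}-\xi x)$ with $c\ne0$ obeys Weil --- is what circumvents the lack of factorisation and keeps the whole argument within Cauchy--Schwarz, Plancherel, and Weil.
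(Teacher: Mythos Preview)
Your argument is correct, but it follows a genuinely different route from the paper's. The paper gives a short, purely combinatorial proof: it pigeonholes on the second coordinate to find a set $Y\subseteq\mathbb{F}_q^d$ of size $\gg q^{(d+1)/2}$ such that every fibre $E_y=\{x:(x,y)\in E\}$ with $y\in Y$ also has size $\gg q^{(d+1)/2}$, and then applies the one-parameter bound $|\Delta^s(X,Y)|\gg q$ whenever $|X||Y|\gg q^{d+1}$ (quoted from Vinh) twice---once to $Y\times Y$ and once to each pair of fibres $E_z\times E_t$---to produce $\gg q$ choices in each coordinate of $\Delta_{d,d}^s(E)$.

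Your proof instead runs the full $L^2$/Fourier machinery directly on $E\subseteq\mathbb{F}_q^{2d}$, with the autocorrelation-plus-Weil device handling the non-factorisable $s$-distance. The trade-offs are: the paper's approach is shorter, is modular (any improvement to the one-parameter threshold plugs in immediately---this is exactly how they obtain Theorem~\ref{res3}), and hides all character-sum work inside the cited lemma. Your approach is self-contained, yields the sharper energy estimate $Q\ll|E|^4/q^2$ rather than merely a lower bound on $|\Delta_{d,d}^s(E)|$, and makes the dependence on $s$ and on the Weil hypothesis $\gcd(s,p)=1$ explicit (a condition the paper leaves implicit in its black-box lemma).
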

It is worth mentioning that our method also works for the multi-parameter distance set defined for $E \subseteq \mathbb{F}_q^{d_1+\dots+d_k}$, but we do not discuss such extensions herein. 
For the case of $d=2$, we get an improved version of Theorem \ref{res22} for the usual distance function over prime fields.

\begin{theorem}\label{res3}
Let $E \subseteq \mathbb{F}_p^2 \times \mathbb{F}_p^2$. If $|E| \gg p^{\frac{13}{4}}$, then $ |\Delta_{2}(E)| \gg p^2$.
\end{theorem}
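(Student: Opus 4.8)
The plan is to estimate the second moment of the two-parameter distance counting function and reduce matters to a planar distance-energy bound, which over a prime field is accessible through Rudnev's point-plane incidence theorem. Writing a point of $E$ as $(x_1,x_2)$ with $x_1,x_2\in\mathbb{F}_p^2$, for $(t,s)\in\mathbb{F}_p\times\mathbb{F}_p$ set
\[
\nu(t,s)=\bigl|\{((x_1,x_2),(y_1,y_2))\in E\times E:\ \|x_1-y_1\|=t,\ \|x_2-y_2\|=s\}\bigr|.
\]
Then $\sum_{(t,s)}\nu(t,s)=|E|^2$ and the support of $\nu$ is exactly $\Delta_{2,2}(E)$, so Cauchy-Schwarz gives $|E|^4\le|\Delta_{2,2}(E)|\cdot\Lambda$ with $\Lambda:=\sum_{(t,s)}\nu(t,s)^2$. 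Hence it suffices to prove the energy bound $\Lambda\ll|E|^4/p^2$ whenever $|E|\gg p^{13/4}$. Expanding the square, $\Lambda$ is the number of quadruples $(a,b,c,d)\in E^4$ with $\|a_1-b_1\|=\|c_1-d_1\|$ and $\|a_2-b_2\|=\|c_2-d_2\|$.

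To separate the two coordinates I would slice over the second one. Writing $A_w=\{u\in\mathbb{F}_p^2:(u,w)\in E\}$ for the fiber of $E$ over a second coordinate $w$ and grouping the quadruple by its second coordinates, one gets
\[
\Lambda=\sum_{\substack{(a_2,b_2,c_2,d_2):\\ \|a_2-b_2\|=\|c_2-d_2\|}}\bigl|\{(a_1,b_1,c_1,d_1)\in A_{a_2}\times A_{b_2}\times A_{c_2}\times A_{d_2}:\ \|a_1-b_1\|=\|c_1-d_1\|\}\bigr|.
\]
The inner quantity is a four-set distance energy in $\mathbb{F}_p^2$, which I would treat by the rigid-motion reformulation: for nonzero distances, a pair $(a_1,b_1)$ and a pair $(c_1,d_1)$ are equidistant if and only if a unique element $g$ of the group $\mathrm{SE}_2(\mathbb{F}_p)$ of orientation-preserving rigid motions satisfies $ga_1=c_1$, $gb_1=d_1$, so the inner count equals $\sum_{g}|A_{a_2}\cap g^{-1}A_{c_2}|\,|A_{b_2}\cap g^{-1}A_{d_2}|$. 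Rudnev's incidence bound controls such sums, producing the expected main term $r(a_2)r(b_2)r(c_2)r(d_2)/p$, where $r(w)=|A_w|$, plus a genuinely lower-order incidence error carrying a power of $p$. This is the step where primality is used, since Rudnev's theorem is a prime-field statement, which explains why the improvement is special to $\mathbb{F}_p$.

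Summing the main terms over the outer constraint reassembles the target: with $\rho(s)=\sum_{\|u-w\|=s}r(u)r(w)$ it becomes $\tfrac{1}{p}\sum_s\rho(s)^2$, and since $\sum_s\rho(s)=|E|^2$ this equals the desired $|E|^4/p^2$ up to the failure of the second-coordinate distances to equidistribute. That failure is itself a weighted planar distance-energy, which I would bound by a second application of the same Rudnev machinery to the weighted set of second coordinates. Thus the method is essentially symmetric: one planar distance-energy estimate per coordinate, glued together through the slicing identity, with both the main-term reduction and the error both feeding into the final threshold.

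The main obstacle will be the error bookkeeping. Propagating the incidence error through the outer sum creates contributions weighted by square roots of fiber sizes, and together with the second-coordinate energy error these must all be shown to be $\ll|E|^4/p^2$; the exponent $13/4$ is precisely the threshold at which the largest such error first drops below the main term. Making this rigorous requires a dyadic decomposition of $E$ according to fiber size in order to handle non-uniform fibers, a separate treatment of the degenerate contributions coming from zero distances, orientation-reversing congruences, and the collinear or concurrent configurations excluded in the incidence bound, and a careful optimization to confirm that none of these worsens the exponent beyond $13/4$.
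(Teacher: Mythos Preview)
Your energy/second-moment route is plausible and ultimately feeds on the same prime-field input (the isoceles-triangle bound of Murphy--Petridis--Pham--Rudnev--Stevens, which rests on Rudnev's point--plane theorem), but it is a genuinely different and much heavier argument than the paper's. The paper does not estimate $\Lambda$ at all. Instead it proves a bipartite one-parameter lemma: if $X,Y\subset\mathbb{F}_p^2$ with $|X|,|Y|\gg p^{5/4}$ then $|\Delta(X,Y)|\gg p$ (this follows from the cited isoceles bound via two Cauchy--Schwarz steps). Then it simply reruns the pigeonhole argument of Theorem~\ref{res1}: from $|E|\ge Cp^{2+5/4}$ the set $Y=\{y:|E_y|>\tfrac{C}{2}p^{5/4}\}$ has $|Y|\gg p^{5/4}$, so the lemma gives $\gg p$ distances among second coordinates, and for each such distance the lemma applied to the corresponding pair of fibers gives $\gg p$ first-coordinate distances. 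This two-line reduction completely sidesteps the issues you flag --- non-uniform fibers, dyadic decomposition, error propagation through the outer sum, degenerate/zero-distance contributions --- because it never forms the four-set energy in the first place. Your approach, if the bookkeeping is carried out, should also land at $13/4$ (the threshold $p^{5/4}$ for the planar energy is the same input), and it has the conceptual merit of being an $L^2$ argument that could in principle say something about the distribution of $\nu(t,s)$ rather than just the support; but for the stated theorem the paper's reduction is dramatically simpler and leaves nothing to verify beyond the one-parameter lemma.
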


We note that the continuous version of Theorems \ref{res1} and \ref{res3} have been studied in \cite{ioo0, ioo1}. However, the authors do not know whether the method in this paper can be extended to that setting. Moreover, it follows from our approach that the conjecture exponent $\frac{d}{2}$ of the (one-parameter) distance problem would imply the sharp exponent for two-parameter analogue, namely, $\frac{3d}{2}$ for even dimensions. We refer the reader to \cite{BI17} for constructions and more discussions.
\section{Proof of Theorem \ref{res1}}
The following lemma  plays a key role in our proof for Theorem \ref{res1}. 
\begin{lemma}[Theorem 2.3, \cite{Vinh}]\label{lmm1}
Let $X, Y \subseteq \mathbb{F}_q^d$. Define $\Delta^s(X, Y)=\{\|x-y\|_s\colon x\in X, y\in Y\}$. If $|X||Y|\gg q^{d+1}$, then $|\Delta^s(X, Y)|\gg q$.
\end{lemma}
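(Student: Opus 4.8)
The plan is to follow Vinh's spectral (expander mixing) approach, reducing the lemma to showing that every admissible value of $\|\cdot\|_s$ is actually attained by a pair in $X\times Y$. For $t\in\mathbb{F}_q$ set $\nu(t)=|\{(x,y)\in X\times Y:\|x-y\|_s=t\}|$; it suffices to prove that $\nu(t)>0$ for all but $O(1)$ values of $t$, since this already gives $|\Delta^s(X,Y)|\gg q$. Writing $V_t=\{w\in\mathbb{F}_q^d:\sum_{i=1}^d a_i w_i^s=t\}$, we have $\nu(t)=\sum_{x\in X,\,y\in Y}\mathbf{1}_{V_t}(x-y)$, so $\nu(t)$ is exactly the number of edges between $X$ and $Y$ in the Cayley graph on the additive group $\mathbb{F}_q^d$ with connection set $V_t$.

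First I would expand $\mathbf{1}_{V_t}$ in additive characters and isolate the main term. Letting $\chi$ be a nontrivial additive character of $\mathbb{F}_q$ and $\widehat{g}(\xi)=\sum_u g(u)\chi(\xi\cdot u)$, orthogonality yields $\nu(t)=q^{-d}\sum_{\xi}\widehat{\mathbf{1}_{V_t}}(\xi)\,\widehat{\mathbf{1}_X}(-\xi)\,\widehat{\mathbf{1}_Y}(\xi)$. The term $\xi=0$ contributes the main term $q^{-d}|V_t|\,|X|\,|Y|$, while by Cauchy--Schwarz and Parseval ($\sum_\xi|\widehat{\mathbf{1}_X}(\xi)|^2=q^d|X|$) the remaining terms are bounded by $\bigl(\max_{\xi\ne0}|\widehat{\mathbf{1}_{V_t}}(\xi)|\bigr)\sqrt{|X||Y|}$. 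Everything therefore reduces to two inputs: a lower bound $|V_t|\gg q^{d-1}$ valid for all but $O(1)$ values of $t$, and the eigenvalue bound $\max_{\xi\ne0}|\widehat{\mathbf{1}_{V_t}}(\xi)|\ll q^{(d-1)/2}$.

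Both inputs come from estimating the complete exponential sum $\sum_{m\in\mathbb{F}_q,\,w\in\mathbb{F}_q^d}\chi\bigl(m(\textstyle\sum_i a_iw_i^s-t)-\xi\cdot w\bigr)$ attached to the hypersurface defining $V_t$. For the count one takes $\xi=0$: the sum factors over coordinates, and Weil's bound $|\sum_{w_i}\chi(ma_iw_i^s)|\le(s-1)\sqrt{q}$ gives $|V_t|=q^{d-1}+O(q^{d/2})$, which is $\gg q^{d-1}$ for $d\ge3$, the borderline $d=2$ being covered by the finer count. For the eigenvalue bound one keeps $\xi\ne0$: each coordinate factor is again a Weil sum of size $\le(s-1)\sqrt{q}$, contributing $\ll q^{d/2}$ for each $m$, and the extra square-root saving needed to reach $q^{(d-1)/2}$ comes from cancellation in the auxiliary sum over $m$ (a Kloosterman-type sum when $s=2$, and the corresponding Weil--Deligne estimate for general $s$, valid under nondegeneracy hypotheses such as $\gcd(s,q)=1$).

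Assembling these gives $\bigl|\nu(t)-q^{-d}|V_t|\,|X|\,|Y|\bigr|\le q^{(d-1)/2}\sqrt{|X||Y|}$ for all but $O(1)$ values of $t$, with main term of order $q^{-1}|X||Y|$. Hence $\nu(t)>0$ as soon as $q^{-1}|X||Y|\gg q^{(d-1)/2}\sqrt{|X||Y|}$, i.e. $|X||Y|\gg q^{d+1}$ --- precisely the hypothesis --- so $\nu(t)>0$ for at least $q-O(1)$ values of $t$ and $|\Delta^s(X,Y)|\gg q$. I expect the main obstacle to be the eigenvalue estimate $\max_{\xi\ne0}|\widehat{\mathbf{1}_{V_t}}(\xi)|\ll q^{(d-1)/2}$: applying Weil's bound coordinatewise alone only yields $q^{d/2}$, and recovering the missing factor $\sqrt{q}$ --- which is exactly what makes the threshold $q^{d+1}$ rather than $q^{d+2}$, and what a plain second-moment argument fails to capture --- requires exploiting cancellation in $m$ and, for general $s$, verifying the nondegeneracy conditions under which the relevant sums enjoy full square-root cancellation.
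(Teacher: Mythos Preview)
The paper does not prove this lemma; it is simply quoted from \cite{Vinh}. Your sketch is essentially a reconstruction of the argument behind that result: expand $\nu(t)$ in additive characters, extract the main term $q^{-1}|X||Y|$ from $\xi=0$, and bound the remainder by $\lambda\sqrt{|X||Y|}$ with $\lambda=\max_{\xi\ne0}|\widehat{\mathbf{1}_{V_t}}(\xi)|$, so that $\nu(t)>0$ for all but $O(1)$ values of $t$ once $|X||Y|\gg q^{d+1}$, provided $\lambda\ll q^{(d-1)/2}$. You also correctly isolate that eigenvalue bound as the only substantive step and are honest that it is where the work lies.

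For $s=2$ your proposed route (complete the square in each coordinate, reduce the remaining $m$-sum to a Kloosterman/Sali\'e sum) is exactly right and yields $\lambda\ll q^{(d-1)/2}$. For general $s$, however, the ``extra cancellation in $m$'' heuristic is hard to make precise: the one-variable Weil sums $\sum_{w}\chi(ma_iw^s+\xi_iw)$ have no closed form to feed into a second Weil bound over $m$, so the double square-root saving does not fall out mechanically. The cleaner way to obtain the bound is to view $\widehat{\mathbf{1}_{V_t}}(\xi)$ directly as an exponential sum along the affine hypersurface $V_t$ and invoke a Deligne-type estimate, noting that when $t\ne0$ and $\gcd(s,\mathrm{char}\,\mathbb{F}_q)=1$ the gradient of $\sum_i a_iw_i^s$ vanishes only at the origin, so $V_t$ is smooth. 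With that input supplied, the rest of your argument is correct and gives the stated threshold $|X||Y|\gg q^{d+1}$.
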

\begin{proof}[Proof of Theorem \ref{res1}]
By assumption, we have $|E|\ge  Cq^{d+\frac{d+1}{2}}$ for some constant $C>0$. For $y\in \mathbb{F}_q^d$, let $E_y:=\left\{x\in \mathbb F_q^d : (x, y)\in E\right\}$, and define 
\[
Y:=\left\{ y\in \mathbb{F}_q^d : ~ |E_y|> \frac{C}{2} q^{\frac{d+1}{2}} \right\}.
\]
We first show that $ |Y| \ge \frac{C}{2} q^{\frac{d+1}{2}}$. Note that
\[
|E|=\sum_{y \in Y} |E_y| + \sum_{y \in \mathbb{F}^d_q\setminus Y} |E_y| ~ \le ~
  q^d|Y| + \sum_{y \in \mathbb{F}^d_q\setminus Y} |E_y|,
\]
where the last inequality holds since $|E_y| \le q^d$ for $y\in \mathbb{F}_q^d$. Combining it with the assumption on $|E|$ gives the lower bound 
$
\sum_{y \in \mathbb{F}^d_q\setminus Y} |E_y| \ge Cq^{d+\frac{d+1}{2}} - q^d|Y|.
$
On the other hand, by definition, we have $|E_y| \le \frac{C}{2} q^{\frac{d+1}{2}}$ for $y \in \mathbb{F}^d_q\setminus Y$ yielding the upper bound 
$
\sum_{y \in \mathbb{F}^d_q\setminus Y} |E_y| \le \frac{C}{2} q^{d+\frac{d+1}{2}}
$.
Thus, these two bounds altogether give $Cq^{d+\frac{d+1}{2}} - q^d|Y| \le \frac{C}{2} q^{d+\frac{d+1}{2}}$, proving the claimed bound $|Y| \ge \frac{C}{2} q^{\frac{d+1}{2}}$.

In particular, Lemma \ref{lmm1} implies $|\Delta^s(Y,Y)|\gg q$, as $|Y||Y| \gg q^{d+1}$. On the other hand, for each $u\in \Delta^s(Y,Y)$, there are $z, t\in Y$ such that $\|z-t\|=u$. One has $|E_z|, |E_t|\gg q^{\frac{d+1}{2}}$, therefore, again by Lemma \ref{lmm1}, $|\Delta^s(E_z, E_t)|\gg q$. Furthermore, for $v\in \Delta^s(E_z, E_t)$, there are $x\in E_z$ and $y\in E_t$ satisfying $\|x-y\|_s=v$. Note that $x\in E_z$ and $y\in E_t$ mean that $(x,z), (y,t)\in E$. Thus, $(v,u)=(\|x-y\|_s, \|z-t\|_s)\in \Delta_{d, d}^s(E)$. From this, we conclude that $|\Delta_{d, d}^s(E)|\gg q|\Delta^s(Y,Y)|\gg q^2$, which completes the proof.
\end{proof}
\section{Proof of Theorem \ref{res3}}
To improve the exponent over prime fields $\mathbb{F}_p$, we strengthen Lemma \ref{lmm1} as follows. Following the proof of Theorem \ref{res1} with Lemma \ref{lmm2} below proves Theorem \ref{res3} then.
\begin{lemma}\label{lmm2}
Let $X, Y \subseteq \mathbb{F}_p^2$. If $|X|, |Y|\gg p^{\frac 5 4}$, then  $|\Delta(X, Y)|\gg p$.
\end{lemma}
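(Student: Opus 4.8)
The plan is to reduce the lemma to an upper bound on the \emph{distance energy} of the pair $(X,Y)$, and then to control that energy by a point--plane incidence bound over $\mathbb{F}_p$, the primality entering precisely at the incidence step. For $r \in \mathbb{F}_p$ write $\nu(r) = \#\{(x,y) \in X \times Y : \|x-y\| = r\}$, so that $\sum_r \nu(r) = |X||Y|$ and the support of $\nu$ is exactly $\Delta(X,Y)$. By Cauchy--Schwarz,
\[
(|X||Y|)^2 = \Big(\sum_{r} \nu(r)\Big)^2 \le |\Delta(X,Y)| \sum_r \nu(r)^2 ,
\]
so it suffices to prove that the energy $E := \sum_r \nu(r)^2$, i.e.\ the number of quadruples $(x,x',y,y') \in X^2 \times Y^2$ with $\|x-y\| = \|x'-y'\|$, satisfies the structure--independent bound
\[
E \ \ll\ \frac{(|X||Y|)^2}{p} + |X||Y|\,p^{3/2}.
\]
This already yields the lemma: the second term is dominated by the first exactly when $|X||Y| \gg p^{5/2}$, which is guaranteed by $|X|,|Y| \gg p^{5/4}$.

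To bound $E$ I would pass from the quadratic equation $\|x-y\|=\|x'-y'\|$ to a linear incidence problem. Away from the degenerate locus where the common distance is $0$, two pairs satisfy $\|x-y\|=\|x'-y'\|$ precisely when there is a rigid motion $g=(R_\theta,\tau)$ with $gx=x'$ and $gy=y'$, and the number of such $g$ is $O(1)$. Hence, up to the degenerate contribution and a bounded factor,
\[
E \ \asymp\ \sum_{\theta}\ \#\big\{(x,x',y,y')\in X^2\times Y^2 : x'-R_\theta x = y'-R_\theta y\big\},
\]
the sum ranging over the $\sim p$ rotations $R_\theta$ (and reflections). Following the Elekes--Sharir paradigm I would encode each pair $(x,x')$ by the curve $\{(\theta,\,x'-R_\theta x)\}$ and each pair $(y,y')$ by $\{(\theta,\,y'-R_\theta y)\}$ inside the three--dimensional parameter space $\{(\theta,w)\}\cong\mathbb{F}_p^3$; after the usual rational parametrisation of the rotation group these become lines, and $E$ becomes the number of incidences between the two families. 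Dualizing lines to planes, I would invoke Rudnev's point--plane incidence theorem in $\mathbb{F}_p^3$: with $m$ points, $n$ planes, $m\le n$ and at most $k$ collinear points one has $I\ll mn/p + m^{1/2}n + kn$. The leading term reproduces $(|X||Y|)^2/p$, while the remaining terms, once the maximal number of collinear points is controlled by the trivial $k\le p$, are dominated by $|X||Y|\,p^{3/2}$.

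The main obstacle, and the only place where I expect genuine work, is this incidence step: first, arranging the three--dimensional encoding so that the objects really are points and planes rather than higher--degree surfaces, and second, quantifying the degenerate configurations --- quadruples with common distance $0$, coincident points, and above all the collinear points in $\mathbb{F}_p^3$ that drive the $kn$ term --- so that they contribute only to the claimed error. This is exactly where primality is indispensable: Rudnev's theorem, hence the $p^{5/4}$ saving over the spectral bound of Lemma \ref{lmm1}, genuinely fails over fields containing large subfields, which is also why no such improvement can be expected for general $\mathbb{F}_q$.
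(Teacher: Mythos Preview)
Your opening reduction via Cauchy--Schwarz to the distance energy $E=\sum_r\nu(r)^2$ matches the paper, which calls this quantity $Q$. The paper then diverges: rather than attacking the four--variable energy directly, it applies Cauchy--Schwarz a \emph{second} time to obtain $Q\ll |X|\cdot T$, where $T$ counts isoceles triples $(x,y,y')\in X\times Y\times Y$ with $\|x-y\|=\|x-y'\|$, symmetrises by passing to $Z=X\cup Y$, and then quotes as a black box the bound
\[
T'\ \ll\ \frac{|Z|^3}{p}+p^{2/3}|Z|^{5/3}+p^{1/4}|Z|^2
\]
from Murphy--Petridis--Pham--Rudnev--Stevens \cite{pham}. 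For $|Z|\gg p^{5/4}$ the first term dominates and the lemma follows in three lines.

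Your route instead tries to bound $E$ directly via Elekes--Sharir plus Rudnev's point--plane theorem; this is essentially proposing to reprove the machinery behind the cited result rather than invoke it. The gap is exactly where you place it, but it is larger than you suggest. After the Elekes--Sharir parametrisation you are left with a \emph{line--line} incidence problem in $\mathbb{F}_p^3$ (a family of $|X|^2$ lines against a family of $|Y|^2$ lines), and ``dualising lines to planes'' is not a well-defined move in three dimensions --- lines are self-dual in $\mathbb{P}^3$. Converting line--line incidences into something Rudnev's $\mathbb{F}_p^3$ theorem can eat requires either a Pl\"ucker embedding into higher dimension (where the theorem no longer applies as stated) or a further Cauchy--Schwarz reduction to a three--variable count; and with the natural choices $m=|X|^2$, $n=|Y|^2$ the Rudnev error terms $m^{1/2}n+kn$ do not simplify to your asserted $|X||Y|\,p^{3/2}$. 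In practice the known path to these bounds over $\mathbb{F}_p$ passes through the pinned/isoceles count $T$ --- so the paper's second Cauchy--Schwarz is not merely a shortcut but the technically correct reduction that makes the incidence geometry tractable.
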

\begin{proof}
It is clear that if $X' \subseteq X$ and $Y' \subseteq Y$, then $\Delta(X',Y')\subseteq \Delta(X,Y)$. Thus, without loss of generality, we may assume that $|X|=|Y|=N$ with $N\gg p^{\frac 5 4}$.
Let $Q$ be the number of quadruples $(x, y, x', y')\in X\times Y\times X\times Y$ such that $\|x-y\|=\|x'-y'\|$. It follows easily from the Cauchy-Schwarz inequality that
\[|\Delta(X, Y)|\gg \frac{|X|^2|Y|^2}{Q}.\]
Let $T$ be the number of triples $(x, y, y')\in X\times Y\times Y $ such that $\|x-y\|=\|x-y'\|$. By the Cauchy-Schwarz inequality again, one gets $Q\ll |X|\cdot T$. Next, we need to bound $T$. For this, denote $Z=X\cup Y$, then $N\le |Z|\le 2N$. Let $T'$ be the number of triples $(a, b, c)\in Z\times Z\times Z$ such that $\|a-b\|=\|a-c\|$. Obviously, one gets $T\le T'$. On the other hand, it was recently proved (see \cite[Theorem 4]{pham}) that 
\[T'\ll \frac{|Z|^3}{p}+p^{2/3}|Z|^{5/3}+p^{1/4}|Z|^2,\]
which gives  
\[T\ll \frac{N^3}{p}+p^{2/3}N^{5/3}+p^{1/4}N^2,\]
and then $T\ll \dfrac {N^3}p$ (since $N\gg p^{\frac 5 4}$).
Putting all bounds together we obtain
$$\dfrac{N^3}{|\Delta(X,Y)|}=\dfrac{|X||Y|^2}{|\Delta(X,Y)|}\ll \dfrac Q{|X|}\ll T\ll \dfrac {N^3}p,$$
or equivalently, $|\Delta(X,Y)|\gg p$, as required.
\end{proof}

\section*{Acknowledgment}
The authors would like to thank Thang Pham for sharing insights and new ideas.

\end{document}